\immediate\write18{biber \jobname}

\documentclass{amsart}

\usepackage{microtype}
\usepackage{amssymb}
\usepackage{mathtools}
\usepackage{tikz-cd}
\usepackage{mathbbol}
\usepackage{csquotes}

\usepackage[
	backend=biber,
	style=alphabetic,
	backref=true,
	url=false,
	doi=false,
	isbn=false,
	eprint=false]{biblatex}

\renewbibmacro{in:}{}  
\AtEveryBibitem{\clearfield{pages}}  

\DeclareFieldFormat{title}{\myhref{\mkbibemph{#1}}}
\DeclareFieldFormat
[article,inbook,incollection,inproceedings,patent,thesis,unpublished]
{title}{\myhref{\mkbibquote{#1\isdot}}}

\newcommand{\myhref}[1]{%
	\ifboolexpr{%
		test {\ifhyperref}
		and
		not test {\iftoggle{bbx:eprint}}
		and
		not test {\iftoggle{bbx:url}}
	}
	{\href{\doiorurl}{#1}}
	{#1}%
}

\usepackage[
	bookmarks=true,
	linktocpage=true,
	bookmarksnumbered=true,
	breaklinks=true,
	pdfstartview=FitH,
	hyperfigures=false,
	plainpages=false,
	naturalnames=true,
	colorlinks=true,
	pagebackref=false,
	pdfpagelabels]{hyperref}

\hypersetup{
	colorlinks,
	citecolor=blue,
	filecolor=blue,
	linkcolor=blue,
	urlcolor=blue
}

\usepackage[capitalize, noabbrev]{cleveref}
\crefname{subsection}{\S\!}{subsections}

\addtolength{\textwidth}{0in}
\addtolength{\textheight}{0in}
\calclayout

\makeatletter
\@namedef{subjclassname@2020}{%
	\textup{2020} Mathematics Subject Classification}
\makeatother

\setcounter{tocdepth}{1}


\setcounter{subsection}{-1}


\newtheorem*{theorem*}{Theorem}

\newtheorem*{proposition*}{Proposition}

\newtheorem*{lemma*}{Lemma}

\newtheorem*{corollary*}{Corollary}

\theoremstyle{definition}

\newtheorem*{definition*}{Definition}

\newtheorem*{remark*}{Remark}

\newtheorem*{example*}{Example}

\newtheorem*{construction*}{Construction}

\newtheorem*{convention*}{Convention}

\newtheorem*{terminology*}{Terminology}

\newtheorem*{notation*}{Notation}

\newtheorem*{question*}{Question}

\hyphenation{co-chain}
\hyphenation{co-chains}
\hyphenation{co-al-ge-bra}
\hyphenation{co-al-ge-bras}
\hyphenation{co-bound-ary}
\hyphenation{co-bound-aries}
\hyphenation{Func-to-rial-i-ty}
\hyphenation{colim-it}
\hyphenation{di-men-sional}

\DeclareMathOperator{\face}{d}

\DeclareMathOperator{\sign}{sign}
\newcommand{\ot}{\otimes}

\newcommand{\N}{\mathbb{N}}
\newcommand{\Z}{\mathbb{Z}}

\newcommand{\R}{\mathbb{R}}

\newcommand{\Sym}{\mathbb{S}}

\newcommand{\Ftwo}{{\mathbb{F}_2}}

\newcommand{\gsimplex}{\mathbb{\Delta}}


\newcommand{\Fun}{\mathsf{Fun}}

\newcommand{\Ch}{\mathsf{Ch}}


\DeclareMathOperator{\Sq}{Sq}


\DeclarePairedDelimiter\bars{\lvert}{\rvert}

\DeclarePairedDelimiter\set{\{}{\}}

\newcommand{\id}{\mathsf{id}}

\newcommand{\op}{\mathrm{op}}

\newcommand{\xla}[1]{\xleftarrow{#1}}

\newcommand{\defeq}{\stackrel{\mathrm{def}}{=}}


\newcommand{\cN}{\mathcal{N}}

\newcommand{\rH}{\mathrm{H}}


\addbibresource{aux/usualpapers.bib}


\DeclareMathOperator{\assembly}{A}
\DeclareMathOperator{\coMod}{coMod}

\DeclareMathOperator{\copr}{\Delta}
\DeclareMathOperator{\aug}{\epsilon}

\DeclareMathOperator{\chains}{C}
\DeclareMathOperator{\cochains}{C}
\DeclareMathOperator*{\Ot}{\otimes}
\newcommand{\downtriangle}{\text{\Large$\triangledown$}}

\addbibresource{aux/bibliography.bib} 

\title{Ranicki--Weiss assembly and the Steenrod construction}

\author{Anibal~M.~Medina-Mardones}
\address{A.M-M., Max Planck Institute for Mathematics \and University of Notre Dame}
\email{\href{mailto:ammedmar@mpim-bonn.mpg.de}{ammedmar@mpim-bonn.mpg.de}}

\date{\today}
\subjclass[2020]{55U10, 55U15, 18F20, 57R67}

\keywords{Assembly, simplicial complex, chain complex, presheaves, Steenrod construction, cup-$i$ coproducts, comodules}
\dedicatory{In memory of Andrew Ranicki}

\begin{document}
	\newpage

\begin{abstract}
	We show that the Ranicki--Weiss assembly functor, going from chain complex valued presheaves on a simplicial complex to comodules over its Alexander--Whitney coalgebra, factors fully faithfully through the category of comodules over its Steenrod cup-$i$ coalgebra.
\end{abstract}
	\maketitle

\subsection{Introduction}\label{ss:introduction}

We begin with a quote from Andrew's \textit{Algebraic $L$-theory and topological manifolds}.
\begin{displaycquote}{ranicki1992topological}
	Generically, \textit{assembly} is the passage from a local input to a global output.
	The input is usually topologically invariant and the output is homotopy invariant.
	This is the case in the original geometric assembly map of Quinn, and the algebraic $L$-theory assembly map defined here.
\end{displaycquote}
In this note, we will consider the chain complex assembly of Andrew and M. Weiss \cite{ranicki1990assembly}, which can be extended to the $L$-theory assembly functors mentioned in the quote by considering chain complexes with derived Poincar\'e duality.
The connections to assembly via stable homotopy theory \cite{weiss1995asssembly} or equivariant homotopy theory \cite{davis1998assembly} will not be discussed.
In the context of this work, locality is encoded via a simplicial complex $X$, which for simplicity will now be assumed simply-connected.
We will focus on contravariant functors from $X$, regarded as a poset category, to the category of chain complexes $\Ch$.
The assembly of one such functor $\cN$ is its homotopy colimit, defined more precisely as its tensor product $\chains \Ot_X \cN$ with the covariant functor $\chains \colon X \to \Ch$ assigning to a simplex the chains on its closure subcomplex.
Andrew and M. Weiss factored the assembly functor
\[
\assembly \colon \Fun(X^\op, \Ch) \to \coMod_{\chains(X)} \to \Ch
\]
through the category of comodules over the Alexander--Whitney coalgebra of chains on $X$, and constructed, for any finite such comodule $N$, a zig-zag of comodule quasi-isomorphisms
\[
N \to N' \leftarrow \assembly \cN.
\]
For any two functors $\cN,\cN' \colon X^\op \to \Ch$ the chain map
\[
\Fun(X^\op,\Ch)(\cN,\cN') \to \Ch(\assembly \cN, \assembly \cN')
\]
induced by the assembly functor is not surjective in general.
That is to say, the assembly functor is not full.
Indeed, the morphisms in the domain are defined locally, whereas in the target they are global in nature.
The purpose of this article is to describe a subcategory $\coMod^{\Sym_2}_{\chains(X)}$ of $\coMod_{\chains(X)}$ and a further factorization
\[
\assembly \colon \Fun(X^\op, \Ch) \to
\coMod^{\Sym_2}_{\chains(X)} \to
\coMod_{\chains(X)} \to
\Ch
\]
with the first arrow a fully faithful functor.

Let us now motivate and describe the category $\coMod^{\Sym_2}_{\chains(X)}$.
For Andrew's $L$-theory assembly functors, it is important to provide the objects in $\Fun(X^\op, \Ch)$ with derived Poincar\'e duality.
For example, let us consider a (geometric) Poincar\'e duality complex $X$ with a preferred fundamental cycle, also denoted by $X$.
The Alexander--Whitney diagonal $\copr_0 \colon \chains(X) \to \chains(X) \ot \chains(X)$ defines a Poincar\'e duality quasi-isomorphism
\[
\begin{tikzcd}[column sep=tiny,row sep=0]
	\cochains^{-k}(X) \arrow[r] & \chains_{n-k}(X) \\
	\alpha \arrow[r,mapsto] & (\alpha \ot \id)\copr_0(X).
\end{tikzcd}
\]
Using $T\copr_0$ instead of $\copr_0$ above, where $T$ is the transposition of tensor factors, one obtains a new Poincar\'e duality quasi-isomorphism.
These two quasi-isomorphisms are different in general since $\copr_0$ and $T\copr_0$ are not equal, but they are chain homotopic.
Steenrod \cite{steenrod1947products} fully derived the $\Sym_2$ symmetry of the diagonal explicitly constructing natural ``higher diagonals'' $\copr_i$ with $\partial \copr_{i+1} = (1 \pm T) \copr_i$ and $\copr_{-1} = 0$.
The evaluation of the fundamental cycle on this richer structure provides $\chains(X)$ with the structure of a (symmetric) algebraic Poincar\'e complex as defined by Andrew.

We will focus on the Steenrod cup-$i$ coalgebra structure on $\chains(X)$ for $X$ not necessarily a Poincar\'e duality complex.
This structure can be expressed as an $\Sym_2$-equivariant chain map
\[
\copr \colon W \ot \chains(X) \to \chains(X) \ot \chains(X)
\]
where $W$ is the minimal free resolution of $\Z$ by $\Z[\Sym_2]$-modules.
We will show that the assembly of any functor $\cN \colon X^\op \to \Ch$ is equipped with a chain map
\[
\nabla \colon W \ot \assembly \cN \to \chains(X) \ot \assembly \cN
\]
specializing to the comodule structure of Andrew and M. Weiss.
The pair $(\assembly \cN, \nabla)$ is an object in the category $\coMod^{\Sym_2}_{\chains(X)}$ whose morphisms are given by linear maps $f \colon N \to N'$ making the diagram
\[
\begin{tikzcd}
	W \ot N \arrow[d, "\triangledown"] \arrow[r, "\id\, \ot f"] &
	W \ot N' \arrow[d, "\triangledown'"] & \\
	\chains(X) \ot N \arrow[r, "f \ot\, \id"] &
	\chains(X) \ot N'
\end{tikzcd}
\]
commute.
We can now state the contribution of this paper.

\begin{theorem*}
	Let $X$ be a simply-connected simplicial complex.
	The Ranicki--Weiss assembly functor to comodules over the Alexander--Whitney coalgebra of $X$ factors through comodules over its Steenrod cup-$i$ coalgebra
	\[
	\Fun(X^\op, \Ch) \to \coMod^{\Sym_2}_{\chains(X)} \to \coMod_{\chains(X)}
	\]
	with the first arrow a fully faithful functor and the second a forgetful one.
\end{theorem*}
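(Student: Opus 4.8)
The plan is to reduce the statement to an explicit cellular model of assembly and then to translate the comodule conditions into a solvable problem over the face poset of $X$. First I would record the standard evaluation of the coend $\assembly\cN = \chains\Ot_X\cN$: using the basis of $\chains(\overline{\sigma})$ indexed by faces together with the coend relations, every class is represented by a top cell, giving a natural isomorphism of graded modules $\assembly\cN \cong \bigoplus_{\sigma}\langle\sigma\rangle\ot\cN(\sigma)$, with $\langle\sigma\rangle$ placed in homological degree $\dim\sigma$ and the differential combining the internal differential of each $\cN(\sigma)$ with the simplicial boundary twisted by the structure maps $\cN(\face_i\sigma\le\sigma)$. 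I would then make the $\Sym_2$-structure explicit: feeding the top cells into the cup-$i$ coproducts $\copr\colon W\ot\chains(X)\to\chains(X)\ot\chains(X)$, retaining front faces in $\chains(X)$ while pushing back faces (restricted via the appropriate structure maps) into the corresponding summand of $\assembly\cN$, yields the map $\nabla\colon W\ot\assembly\cN\to\chains(X)\ot\assembly\cN$. Checking that $\nabla$ is a $\Sym_2$-equivariant chain map, and that a natural transformation $\eta$ induces a morphism $\assembly\eta$ commuting with $\nabla$, realizes the structured functor $\Fun(X^\op,\Ch)\to\coMod^{\Sym_2}_{\chains(X)}$ of the statement.

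For faithfulness I would argue that the $\sigma$-summand $\langle\sigma\rangle\ot\cN(\sigma)$ is recoverable from the comodule structure, the carrying simplex of a cell being detected through the top front face appearing in the $\copr_0$-part of the coaction. Writing a morphism in components $f_{\tau\sigma}\colon\cN(\sigma)\to\cN'(\tau)$ of degree $\dim\tau-\dim\sigma$, the diagonal component $f_{\sigma\sigma}$ of $\assembly\eta$ is exactly $\eta_\sigma$, so $\eta\mapsto\assembly\eta$ is injective on morphism complexes.

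Fullness is the heart of the matter. Given a $\Sym_2$-comodule morphism $f=(f_{\tau\sigma})$, I would first analyze the $\copr_0$-part of the compatibility $(\id\ot f)\nabla = \nabla'(\id\ot f)$ by comparing $\chains(X)$-components degree by degree; this produces a triangularity constraint relating the $f_{\tau\sigma}$ along the face poset and expressing off-diagonal components through the diagonal ones composed with the structure maps. The $\copr_0$ relation alone does not rigidify $f$: since the Alexander--Whitney coproduct is cocommutative only up to homotopy, there exist comodule maps exploiting the front--back asymmetry that are not of the form $\assembly\eta$, which is exactly the failure of fullness noted in the introduction. Here the higher structure intervenes: the cup-$1$ coaction, governed by $\partial\copr_1=(1\pm T)\copr_0$, imposes the missing symmetry and forces the back-face data to agree with the front-face data, pinning every off-diagonal $f_{\tau\sigma}$ to the value dictated by naturality from $\eta_\sigma:=f_{\sigma\sigma}$. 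I expect this comparison, organized as an induction on the dimension of the carrying simplices that controls all the cup-$i$ coactions at once rather than $\copr_0$ and $\copr_1$ alone, to be the main obstacle.

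Finally I would verify that the recovered family $\{\eta_\sigma\}$ is a genuine natural transformation, that is, commutes with the restriction maps $\cN(\sigma\le\sigma')$; the coaction-compatibility supplies these naturality squares locally, and simple-connectivity of $X$ is what I expect to trivialize the monodromy obstruction to gluing these local comparisons into a single natural transformation, completing the identification $f=\assembly\eta$. The second functor $\coMod^{\Sym_2}_{\chains(X)}\to\coMod_{\chains(X)}$ is then the evident forgetful functor retaining only the $\copr_0$-coaction, so the composite recovers the Ranicki--Weiss assembly, as required.
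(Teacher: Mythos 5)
Your setup is consistent with the paper: the top-cell description of $\assembly \cN$, the definition of $\nabla$ by keeping front faces in $\chains(X)$ and pushing back faces into the assembly, and the faithfulness argument via diagonal components all match. The problem is fullness, which you yourself flag as ``the main obstacle'' and leave open --- and the mechanism you sketch for it is not the one that works. The paper's proof does not proceed by extracting a triangularity constraint from the $\copr_0$-coaction and then using the cup-$1$ relation $\partial\copr_1 = (1\pm T)\copr_0$ to restore symmetry. Instead it exploits the special values of the cup-$i$ coproducts at the \emph{top} of their range (\cref{ss:special_cases}): $\copr_i(x)=0$ for $i>\bars{x}$, $\copr_{\bars{x}}(x)=\pm\, x\ot x$, and the even/odd face formula for $\copr_{\bars{x}-1}(x)$. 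Concretely: compatibility with $\nabla_i$ for $i$ the maximal carrying dimension occurring in $f[x\ot c]$ forces $f$ not to raise that dimension; compatibility with $\nabla_{\bars{x}}$ then forces $f[x\ot c]=[x\ot F_x(c)]$ \emph{exactly}, i.e.\ every off-diagonal component of $f$ vanishes identically --- they are not ``pinned to the value dictated by naturality'' as your sketch predicts, since assembled morphisms are strictly diagonal in the top-cell basis; finally, compatibility with $\nabla_{\bars{x}-1}$ and $\nabla^T_{\bars{x}-1}$ gives naturality of $F$ with respect to even and odd faces respectively. The argument is keyed to the dimension of each carrying simplex and genuinely needs $\copr_i$ in all degrees, so an analysis built on $\copr_0$ and $\copr_1$ alone cannot close the gap you acknowledge.

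Your last step also mislocates the role of simple connectivity: you expect it to kill a ``monodromy obstruction'' to gluing the local comparisons $\eta_\sigma$ into a natural transformation. No such obstruction exists, and the paper emphasizes that full faithfulness is proved \textbf{without} assuming $X$ simply-connected (\cref{ss:full embedding}). There is nothing to glue: $F_x$ is defined simplexwise by the identity $f[x\ot c]=[x\ot F_x(c)]$, and naturality is a purely local check on codimension-one faces supplied by the coaction (or, after diagonality, even by the chain-map property of $f$ alone). Simple connectivity enters the theorem statement only so that the paper's definition of assembly agrees with the original one of Ranicki--Weiss (\cref{ss:assembly}); it plays no role in the categorical argument. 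Inventing a monodromy step there is a sign that the actual rigidifying device --- the top-dimensional cup-$i$ values --- is missing from your plan.
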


This statement is 1-categorical, the $(\infty,1)$-categorical version will be discussed elsewhere.
In the $(\infty,1)$-categorical setting we mention the paper \cite{rivera2020system} where a connection between assembly and Brown's twisted tensor product is explored.
The goal of the our theorem above is to provide an algebraic model of $\Fun(X^\op,\Ch)$ that retains its locality properties, serving as a possible step in the path connecting algebraic models of homotopy types \cite{quillen1969rational, sullivan1977infinitesimal,mandell2001padic} and Andrew's total surgery obstruction for topological manifold structures \cite{ranicki1979obstruction,ranicki1992topological,macko2013obstruction}.

\subsection*{Acknowledgement}

I would like to remember Andrew's kind and joyful personality which I enjoyed during many pleasant conversations as a graduate student.
I am also indebted to Dennis Sullivan who introduced me to Andrew's work and suggested the ``Ranicki gambit'' as my thesis project, the technical core of which is the result presented here.
Andrew's mathematics and character had a formative influence on me and on many others.
He will be missed.

\subsection{Conventions}

We write $(\Ch, \ot, \Z)$ for the symmetric monoidal category of chain complexes of free $\Z$-modules, $\Sym_2$ for the group with only one non-identity element $T$, and $X$ for an unspecified simplicial complex.
In this work, the vertices of a simplicial complex are equipped with a partial order compatibly restricting to a total order on each simplex.
We denote the usual integral chains of $X$ by $\chains(X)$.

\subsection{Presheaves}\label{ss:presheaves}

Regard $X$ as a poset category with a morphism $x \to y$ whenever $x$ is a subsimplex of $y$.
The category of \textit{presheaves on} $X$ is the category of functors $\Fun(X^\op, \Ch)$.
Given a presheaf $\cN$ on $X$, we will denote the chain complex it associates to a simplex $x$ by $\cN_x$ and the linear map associated to a morphism $x \to y$ by $\cN_{x \to y}$.
We remark that this category is enriched over $\Ch$.
These presheaves were termed ``local systems'' in \cite{ranicki1990assembly} where $X$ was allowed to be a semi-simplicial set.
We do not use this terminology since the associated topological sheaf --which we describe next-- does not need to be locally constant.

\subsection{Remark on topological sheaves}\label{ss:topological sheaf}

The set of simplices of $X$ can be endowed with a topology making presheaves as defined above into topological sheaves.
Explicitly, this topology on the set of simplices of $X$ is generated by subsets $\bar y = \set{x \in X \mid x \to y}$.
Notice that as poset categories $X$ and the category of open sets of this topology are isomorphic.
Therefore, so are their categories of chain complex valued presheaves.
It can be verified that these topological presheaves are indeed sheaves.

\subsection{Symmetric coalgebras}

Let $W$ be the minimal free resolution of $\Z$ by free $\Z[\Sym_2]$-modules:
\[
\Z[\Sym_2]\set{e_0} \xla{1-T} \Z[\Sym_2]\set{e_1} \xla{1+T} \Z[\Sym_2]\set{e_2} \xla{1-T} \dotsb.
\]
The data of a \textit{symmetric coalgebra} is an augmented chain complex $\varepsilon \colon C \to \Z$ and an $\Sym_2$-equivariant chain map
\[
\triangle \colon W \ot C \to C \ot C,
\]
where the action of $\Sym_2$ in the domain is induced from that in $W$ and in the target is given by transposition.
We will write $\triangle_i$ for $\triangle(e_i \ot -)$ and demand the triple $(C, \triangle_0, \varepsilon)$ to be a coalgebra, i.e.,
a comonoid in $\Ch$.
Notice that
\[
\partial(\triangle_i) = \big( 1+(-1)^i T \big) \triangle_{i-1}
\]
with the convention $\triangle_{-1} = 0$.

\subsection{Remark on higher categories}
\label{ss:higher categories}

An $\omega$-category is the limiting concept obtained from the recursive definition of an $n$-category as a category enriched in $(n-1)$-categories.
Building on ideas from \cite{brown1981cubes, kapranov1991polycategory, steiner2004omega}, in \cite{medina2020globular} a functor from a full subcategory of symmetric coalgebras to $\omega$-categories was constructed and shown to be an equivalence onto a full subcategory recently characterized intrinsically in \cite{ozornova2022steiner}.
Street's orientals \cite{street1987orientals} --which define the nerve of $\omega$-categories-- are obtained by applying this functor to the chains on standard simplices with Steenrod's cup-$i$ coalgebra structure reviewed in \cref{ss:cup-i}.

\subsection{Diagonal, augmentation and join}

The \textit{Alexander--Whitney coproduct}
\[
\copr_0 \colon \chains(X) \to \chains(X) \ot \chains(X)
\]
is defined on a basis element $[v_0, \dots, v_n]$ by
\begin{equation*}\label{e:alexander-whitney coalgebra}
	\copr_0 \big( [v_0, \dots, v_n] \big) =
	\sum_{k=0}^n \ [v_0, \dots, v_k] \ot [v_k, \dots, v_n].
\end{equation*}
We mention that the triple $(\chains(X), \copr_0, \aug)$ where $\aug \colon \chains(X) \to \Z$ is the usual \textit{augmentation map} is a coalgebra.


The \textit{join product}
\[
\ast \colon \chains(X) \ot \chains(X) \to \chains(X)
\]
is the natural degree~$1$ linear map defined on a basis element
\[
[v_0, \dots, v_p] \ot [v_{p+1}, \dots, v_q]
\]
to be $0$ if $\set{v_\ell : \ell = 0, \dots, q}$ is not the set of vertices of a simplex in $X$ or if $v_i = v_j$ for some $i \neq j$,
otherwise
\[
\ast \big(\left[v_0, \dots, v_p \right] \ot \left[v_{p+1}, \dots, v_q\right]\big) =
(-1)^{p} \sign(\pi) \left[v_{\pi(0)}, \dots, v_{\pi(q)}\right]
\]
where $\pi$ is the permutation that orders the vertices.


\subsection{Steenrod construction}\label{ss:cup-i}

We will now describe Steenrod's construction of cup-$i$ coproducts \cite[p.293]{steenrod1947products} making $\chains(X)$ into a symmetric coalgebra.
Let
\[
\copr \colon W \ot \chains(X) \to \chains(X) \ot \chains(X)
\]
be the $\Sym_2$-equivariant chain map determined by the linear maps $\copr_i = \copr(e_i \ot -)$, referred to as \textit{cup-$i$ coproducts}, recursively defined for $i > 1$ by
\begin{equation*}\label{e:cup-i coproducts}
	\copr_i =
	(\ast \ot \id) \circ (\id \ot T\copr_{i-1}) \circ \copr_0.
\end{equation*}
The triple $(\chains(X), \copr, \aug)$ forms a symmetric coalgebra which we refer to as the \textit{Steenrod cup-$i$ coalgebra} of $X$.

Alternative descriptions of cup-$i$ coproducts, all shown to be equivalent to Steenrod's original via an axiomatic characterization \cite{medina2022axiomatic}, can be found in \cite{real1996computability, gonzalez-diaz1999steenrod, mcclure2003multivariable, medina2021fast_sq}.
These axiomatized cup-$i$ coproducts seem to be combinatorially fundamental, being constructed from the convex geometry of the standard simplex $\gsimplex^n$ in $\R^n$ using a generic orthonormal basis \cite{medina2022fib_poly}.

\subsection{Special cases}\label{ss:special_cases}

We will use the following special values of the Steenrod cup-$i$ coalgebra.
For a basis element $x$ of $\chains(X)_n$ and $i > n$ we have:
\begin{align}
	&\copr_i(x) = 0, \\
	&\copr_n(x) = \pm \, x \ot x, \\
	&\copr_{n-1}(x) =
	\sum_{u \text{ even}} \pm \, x \ot \face_u(x) \ +
	\sum_{u \text{ odd}} \pm \face_u(x) \ot x.
\end{align}
where $\pm$ stands for an unspecified sign and $\face_u [v_0,\dots,v_n] = [v_0,\dots,\widehat{v}_u,\dots,v_n]$.

\subsection{Remark on Steenrod squares}\label{ss:steenrod squares}

We mention that Steenrod's introduction of the cup-$i$ coproducts $\copr_i$ was motivated by the desire to construct finer invariants on the cohomology of spaces with mod 2 coefficients.
These are the celebrated \textit{Steenrod squares}
\[
\Sq^k \colon \rH^\bullet(X; \Ftwo) \to \rH^\bullet(X; \Ftwo)
\]
defined on a class $[\alpha]$ of cohomological degree $n$ by
\[
\Sq^k [\alpha] = \big[ (\alpha \ot \alpha) \copr_{n-k}(-) \big].
\]

\subsection{Remark on $E_\infty$-structures}

Although we do not use the following fact in this work, we mention that the cup-$i$ coproducts of Steenrod are part of much larger structure derived from the symmetry of the diagonal of spaces; that of an $E_\infty$-coalgebra structure.
Explicitly, this structure is given by all maps $\chains(X) \to \chains(X)^{\ot r}$ for any $r > 0$ obtained from compositions of the Alexander--Whitney coproduct and the join product.
The underlying model for the $E_\infty$-operad use in this statement is presented in \cite{medina2020prop1, medina2021prop2}.
We mention that this $E_\infty$-coalgebra structure on simplicial chains is compatible with the $E_\infty$-coalgebras of McClure--Smith \cite{mcclure2003multivariable} and Berger--Fresse \cite{berger2004combinatorial}.
We also mention that analogues of the cup-$i$ coproducts modeling Steenrod operations at odd primes were defined in \cite{medina2021may_st} and implemented in \cite{medina2021comch}.
For a discussion using cubes instead of simplices please consult \cite{medina2021cubical}.

\subsection{Comodules}

The data of a \textit{comodule} over a symmetric coalgebra $(C, \triangle, \aug)$ is a chain complex $N$ together with a chain map
\[
\downtriangle \colon W \ot N \to N \ot C.
\]
We will write $\downtriangle_i$ for $\downtriangle(e_i \ot -)$ and $\downtriangle_i^T$ for $\downtriangle(Te_i \ot -)$, and demand that $(N, \downtriangle_{\!0})$ be a comodule over the coalgebra $(C, \triangle_0, \aug)$.

We will denote by $\coMod_C^{\Sym_2}$ the category of such comodules with strict morphisms, i.e. linear maps $f \colon N \to N'$ making the following diagram
\[
\begin{tikzcd}
	W \ot N \arrow[d, "\triangledown"] \arrow[r, "\id\, \ot f"] &
	W \ot N' \arrow[d, "\triangledown'"] & \\
	N \ot C \arrow[r, "f \ot\, \id"] &
	N' \ot C
\end{tikzcd}
\]
commute.
We remark that this category is enriched in $\Ch$.

Although we do not use this fact, we mention that the examples of comodules we will construct make the following diagram commute up to homotopy
\[
\begin{tikzcd}
	W \ot W \ot N \arrow[r, "\id\, \ot \triangledown"] \arrow[d, "\id\, \ot \triangledown"'] &
	W \ot N \ot C \arrow[d, "(\id\, \ot \triangle)(T \ot \id)"] \\
	W \ot N \ot C \arrow[r, "\triangledown \ot\, \id"] &
	N \ot C \ot C.
\end{tikzcd}
\]

\subsection{Closure cosheaf}

For any simplex $x$ in $X$ denote by $\bar x$ the subcomplex of $X$ containing the subsimplices of $x$.
The \textit{closure cosheaf} is the (covariant) functor from the poset category of $X$ to $\Ch$ defined on objects by $x \mapsto \chains(\bar x)$ and on morphisms by canonical inclusions $\chains_{x \to y} \colon \chains(\bar x) \to \chains(\bar y)$.

For any simplex $y$ in $X$ and subsimplex $x$, we will find it convenient to identify the basis element $x$ in $\chains(\bar y)$ with the unique morphism $(x \to y)$.
In particular, we have $\chains_{y \to z} (x \to y) = (x \to z)$.

\subsection{Assembly}\label{ss:assembly}

The \textit{assembly functor}
\[
\assembly \colon \Fun(X^\op, \Ch) \to \Ch
\]
is given by tensoring on $X$ with the closure cosheaf.
Explicitly, let $\cN$ be a sheaf on $X$, then
\[
\assembly \cN = \bigoplus_{x \in X} \chains(\bar x) \ot \cN_x \ / \sim
\]
where for simplices $x \to y \to z$ and $c \in \cN[z]$ we have
\[
(x \to z) \ot c \ \sim \, (x \to y) \ot \cN_{y \to z}(c)
\]
and for a morphism $F \colon \cN \to \cN'$ of presheaves on $X$ we have
\[
\assembly F[(x \to y) \ot c] = [(x \to y) \ot F_y(c)].
\]
We remark that this is a functor of categories enriched in $\Ch$.

If $X$ is simply-connected, this definition agrees with the one given by Andrew and M. Weiss in \cite[Definition 1.4]{ranicki1990assembly}.
We will use the definition above for $X$ not necessarily simply-connected, remarking that it can be applied to the the universal cover of $X$ to recover their definition.

\subsection{Lift to comodules}

We will describe a factorization of the assembly functor
\[
\assembly \colon \Fun(X^\op, \Ch) \to \coMod^{\Sym_2}_{\chains(X)} \to \Ch
\]
where the second arrow is the forgetful functor.

Let $\cN$ be a presheaf on $X$ and let
\[
\nabla \colon W \ot \assembly \cN \to \chains(X) \ot \assembly \cN
\]
be the chain map defined by the linear maps
\[
\nabla_i \defeq \nabla(e_i \ot -)
\quad \text{ and } \quad
\nabla_i^T \defeq \nabla(Te_i \ot -)
\]
which on a pair of simplices $x \to y$ and an element $c \in \cN_y$ are given by
\[
\nabla_i \big( [(x \to y) \ot c] \big) =
\sum_{\lambda \in \Lambda} \alpha_\lambda \, x_\lambda^{(1)} \ot [(x_\lambda^{(2)} \to y) \ot c]
\]
and
\[
\nabla_i^T \big( [(x \to y) \ot c] \big) =
\sum_{\lambda \in \Lambda} \alpha^T_\lambda \, x_\lambda^{(2)} \ot [(x_\lambda^{(1)} \to y) \ot c],
\]
where
\[
\copr_i(x) = \sum_{\lambda \in \Lambda} \alpha_\lambda \, x_\lambda^{(1)} \ot x_\lambda^{(2)}
\]
and
\[
T \copr_i(x) =
\sum_{\lambda \in \Lambda} \alpha^T_\lambda \, x_\lambda^{(2)} \ot x_\lambda^{(1)}.
\]

\begin{lemma*}
	The map $\nabla$ naturally makes the assembly of a presheaf on $X$ into a comodule over the symmetric coalgebra $\chains(X)$.
\end{lemma*}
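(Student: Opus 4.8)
The plan is to reduce every comodule axiom for $(\assembly\cN,\nabla)$ to a corresponding property of the Steenrod cup-$i$ coalgebra through a single reformulation. For a simplex $y$ and an element $c \in \cN_y$, introduce the linear map $\psi_{y,c} \colon \chains(\bar y) \to \assembly\cN$ of degree $\bars{c}$ sending a subsimplex $w$ of $y$ to the class $[(w \to y) \ot c]$. Because every face $x_\lambda^{(1)},x_\lambda^{(2)}$ occurring in $\copr_i(x)$ is a subsimplex of $x$ (the cup-$i$ coproducts are built from the Alexander--Whitney diagonal and the join, so each tensor factor has vertices among those of $x$), and hence of $y$ whenever $x \to y$, the defining formulas can be rewritten as
\[
\nabla_i\big([(x \to y) \ot c]\big) = (\id \ot \psi_{y,c})\,\copr_i(x), \qquad \nabla_i^T\big([(x \to y) \ot c]\big) = (\id \ot \psi_{y,c})\,T\copr_i(x).
\]
I would first use this to see that $\nabla$ descends to the quotient defining $\assembly\cN$: for $x \to y \to z$ the classes $[(x_\lambda^{(2)} \to z) \ot c]$ and $[(x_\lambda^{(2)} \to y) \ot \cN_{y \to z}(c)]$ coincide by the relation generating $\sim$, so $\nabla_i$ and $\nabla_i^T$ are independent of the chosen representative.

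The heart of the argument, and the step I expect to require the most care, is showing that $\nabla$ is a chain map. Unwinding the differential on $W \ot \assembly\cN$ together with $\partial e_i = (1 + (-1)^i T)e_{i-1}$, this is equivalent to the identity $\partial\nabla_i - (-1)^i \nabla_i \partial = \nabla_{i-1} + (-1)^i \nabla_{i-1}^T$, with $\nabla_{-1} = 0$. I would evaluate both sides on a representative $[(x \to y) \ot c]$ using the rewriting above. The differential of $\assembly\cN$ splits into a simplicial part, transported through $\psi_{y,c}$ by the bare identity $\psi_{y,c}(\partial w) = [(\partial w \to y) \ot c]$, and an internal part involving $\partial c$; a direct sign count shows the two $\partial c$-contributions cancel in the combination $\partial\nabla_i - (-1)^i \nabla_i \partial$. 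What survives is exactly $(\id \ot \psi_{y,c})$ applied to $\partial\copr_i(x) - (-1)^i \copr_i(\partial x)$, which by the cup-$i$ relation $\partial\copr_i = (1 + (-1)^i T)\copr_{i-1}$ equals $(\id \ot \psi_{y,c})\big((1 + (-1)^i T)\copr_{i-1}(x)\big) = \nabla_{i-1}([(x \to y) \ot c]) + (-1)^i\nabla_{i-1}^T([(x \to y) \ot c])$. The decisive point is that the transposition $T$ in the coalgebra relation is reproduced precisely because $\nabla_i^T$ was defined from $T\copr_i$; this is what forces the two families to appear together with the correct signs.

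It then remains to verify the level-zero axioms making $(\assembly\cN,\nabla_0)$ a comodule over $(\chains(X),\copr_0,\aug)$, which recovers the comodule structure of Ranicki and Weiss. For the counit, $(\aug \ot \id)\nabla_0([(x \to y) \ot c])$ selects the single term of $\copr_0(x)$ whose left factor is a vertex and returns $[(x \to y) \ot c]$, so $(\aug \ot \id)\nabla_0 = \id$. For coassociativity, applying the rewriting twice yields $(\copr_0 \ot \id)\nabla_0 = (\id \ot \id \ot \psi_{y,c})(\copr_0 \ot \id)\copr_0(x)$ and $(\id \ot \nabla_0)\nabla_0 = (\id \ot \id \ot \psi_{y,c})(\id \ot \copr_0)\copr_0(x)$, which agree by coassociativity of the Alexander--Whitney diagonal. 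Finally, naturality is immediate: for a morphism $F \colon \cN \to \cN'$ the identity $\assembly F[(w \to y) \ot c] = [(w \to y) \ot F_y(c)]$ means $\assembly F$ commutes with $\psi_{y,c}$ in the last tensor factor, hence intertwines $\nabla$ and $\nabla'$, exhibiting the construction as a functor lifting $\assembly$.
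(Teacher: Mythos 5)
Your proposal is correct, and it contains the paper's proof as a proper subset. The paper's entire argument is the well-definedness check: the value of $\nabla_i$ on the representative $(x \to z) \ot c$ matches its value on $(x \to y) \ot \cN_{y \to z}(c)$ because the classes $[(x^{(2)}_\lambda \to z) \ot c]$ and $[(x^{(2)}_\lambda \to y) \ot \cN_{y \to z}(c)]$ coincide in the quotient --- exactly your first step --- and every remaining axiom (chain map, counit, coassociativity, naturality) is dismissed as immediate. What you add is an explicit verification of those remaining axioms, organized by the rewriting $\nabla_i\big([(x \to y) \ot c]\big) = (\id \ot \psi_{y,c})\copr_i(x)$. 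That device earns its keep twice: it records the fact, used silently by the paper, that every tensor factor of $\copr_i(x)$ is a face of $x$ and hence of $y$, so the defining formula even typechecks; and it reduces the chain-map identity $\partial\nabla_i - (-1)^i\nabla_i\partial = \nabla_{i-1} + (-1)^i\nabla^T_{i-1}$ to the coalgebra relation $\partial\copr_i - (-1)^i\copr_i\partial = \big(1+(-1)^i T\big)\copr_{i-1}$, once the two $\partial c$ contributions cancel (their coefficients are $(-1)^{n+i}$ and $-(-1)^{i}(-1)^{n}$ with $n = \bars{x}$, as your sign count asserts), and likewise reduces the counit and coassociativity of $\nabla_0$ to those of $\copr_0$. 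The only real difference is one of emphasis: you treat the chain-map property as the heart of the lemma, whereas for the paper the passage to the quotient is the only issue, precisely because in each local piece $\nabla$ is just $\copr$ relabelled through $\psi_{y,c}$ --- which is, in effect, the content of your reformulation.
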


\begin{proof}
	The only part of the statement that is not immediate is the claim that $\nabla$ is well defined.
	For $i \in \N$, consider a presheaf $\cN$ on $X$, simplices $x \to y \to z$ and an element $c \in \cN_z$, then
	\begin{align*}
		\nabla_i [(x \to z) \ot c] &=
		\sum \alpha_\lambda \, x^{(1)}_\lambda \ot {[(x^{(2)}_\lambda \to z) \ot c]} \\ &=
		\sum \alpha_\lambda \, x^{(1)}_\lambda \ot [(x^{(2)}_\lambda \to y) \ot \cN_{y \to z}(c)] \\ &=
		\nabla_i [(x \to y) \ot \cN_{y \to z}(c)].
	\end{align*}
	A similar argument applies to $\nabla^T_i$ and completes the proof.
\end{proof}

A similar construction was presented by Andrew and M. Weiss in \cite[Proposition~5.3]{ranicki1990assembly}.
In the simply-connected case, $\nabla_0$ agrees with their coaction of the Alexander--Whitney coalgebra of $X$ on $\assembly \cN$.

\subsection{Full embedding}\label{ss:full embedding}

We now prove the main result of this work, for which we are \textbf{not} assuming the simplicial complex $X$ to be simply-connected.

\begin{theorem*}
	The assembly functor from chain complex valued presheaves on $X$ to comodules over the symmetric coalgebra $\chains(X)$ is full and faithful.
\end{theorem*}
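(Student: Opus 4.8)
The plan is to show that for any presheaves $\cN,\cN'$ the chain map
\[
\assembly \colon \Fun(X^\op,\Ch)(\cN,\cN') \to \coMod^{\Sym_2}_{\chains(X)}(\assembly\cN,\assembly\cN')
\]
is a bijection in each degree; being a map of chain complexes it is then an isomorphism of hom-complexes, hence fully faithful. The starting point is a normal form for the assembly. Using the relation $(x\to z)\ot c\sim(x\to y)\ot\cN_{y\to z}(c)$ with the factorization $x\to x\to y$, every generator reduces to a \emph{diagonal} one $[(w\to w)\ot c]$; the assignment $[(w\to x)\ot c]\mapsto\cN_{w\to x}(c)$ then exhibits a graded isomorphism $\assembly\cN\cong\bigoplus_{w\in X}\cN_w$, with $\cN_w$ shifted by $\dim w$. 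Since $\assembly F[(w\to w)\ot c]=[(w\to w)\ot F_w(c)]$ and $c\mapsto[(w\to w)\ot c]$ is injective, faithfulness is immediate: $\assembly F$ recovers every component $F_w$.

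For fullness I must realize an arbitrary comodule morphism $f$ as some $\assembly F$, and here the special values of \cref{ss:special_cases} control $\nabla$ on diagonal generators. Since $\copr_i(x)=0$ for $i>\dim x$, the operator $\nabla_i$ annihilates every diagonal generator supported in dimension $<i$, so $\dim\xi:=\max\{i:\nabla_i\xi\neq0\}$ recovers the top dimension of the support of $\xi$; the identity $\nabla'_i f=(\id\ot f)\nabla_i$ then gives $\nabla'_i(f\xi)=0$ whenever $\nabla_i\xi=0$, so $f$ cannot raise $\dim$. For $\dim w=n$ one has $\copr_n(w)=\pm\,w\ot w$, hence $\nabla_n[(w\to w)\ot c]=\pm\,w\ot[(w\to w)\ot c]$. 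Writing $f[(w\to w)\ot c]=\sum_{\dim w'\le n}[(w'\to w')\ot c'_{w'}]$ (legitimate by the previous sentence) and applying $\nabla'_n$, only the summands with $\dim w'=n$ survive, contributing $\pm\,w'\ot[(w'\to w')\ot c'_{w'}]$; comparing with $\pm\,w\ot f[(w\to w)\ot c]$, whose first tensor factor is always $w$, forces $c'_{w'}=0$ for $w'\neq w$. Thus $f$ is block diagonal, $f[(w\to w)\ot c]=[(w\to w)\ot F_w(c)]$, defining graded components $F_w\colon\cN_w\to\cN'_w$.

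It remains to see that $F=(F_w)_w$ is natural, where the sub-leading coproduct enters. For $\dim y=n$ the value $\copr_{n-1}(y)=\sum_{u\text{ even}}\pm\,y\ot\face_u(y)+\sum_{u\text{ odd}}\pm\,\face_u(y)\ot y$ makes $\nabla_{n-1}[(y\to y)\ot c]$ produce, for even $u$, the term $\pm\,y\ot[(\face_u(y)\to y)\ot c]$, and in normal form $[(\face_u(y)\to y)\ot c]=[(\face_u(y)\to\face_u(y))\ot\cN_{\face_u(y)\to y}(c)]$ carries the restriction of $c$ to the face. Feeding the block-diagonal formula for $f$ into $\nabla'_{n-1}f=(\id\ot f)\nabla_{n-1}$ and reading off the coefficient of $y\ot(-)$ yields, for each even $u$,
\[
\cN'_{\face_u(y)\to y}\circ F_y=F_{\face_u(y)}\circ\cN_{\face_u(y)\to y}.
\]
Running the identical computation with $Te_{n-1}$ in place of $e_{n-1}$, i.e. with $\nabla^T_{n-1}$ and $T\copr_{n-1}$, swaps the two tensor factors and supplies the same square for the \emph{odd} $u$. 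Hence $F$ commutes with every codimension-one face restriction, and composing these along a chain $x\subseteq\dots\subseteq y$ gives naturality for all $x\to y$. Finally $\assembly F=f$ on diagonal generators by construction, so $f$ is in the image and $\assembly$ is full.

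I expect the naturality step to be the main obstacle: block diagonality only pins down the $F_w$ individually, and it is the pair $\copr_{n-1}$, $T\copr_{n-1}$—covering the even- and odd-indexed codimension-one faces respectively—that glues them into a natural transformation. This is exactly the rigidity that $\nabla_0$ alone fails to provide, which is why passing to the full cup-$i$ comodule structure repairs the failure of fullness.
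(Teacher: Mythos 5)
Your proof is correct and takes essentially the same approach as the paper: the same diagonal normal form for $\assembly\cN$, and the same three-step use of the special values $\copr_i(x)=0$ for $i>\dim x$, $\copr_n(x)=\pm\,x\ot x$, and $\copr_{n-1}$ together with its transpose (via $\nabla^T$) to obtain, respectively, the dimension bound, block-diagonality of $f$, and naturality of the resulting components $F_w$. The only cosmetic difference is that you package the dimension bound as a detection function $\max\set{i : \nabla_i\xi\neq 0}$ where the paper argues by contradiction with the maximal dimension in the image.
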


\begin{proof}
	Consider two presheaves $\cN$ and $\cN'$ over $X$ and the function
	\[
	\Fun(X^\op, \Ch)(\cN, \cN') \to \coMod^{\Sym_2}_{\chains(X)}(\assembly \cN, \assembly \cN')
	\]
	induced by the assembly functor, which we will show to be a bijection.

	For a simplex $x$ in $X$ we write throughout this proof $x$ for the basis element it represents in both $\chains(X)$ and $\chains(\bar x)$, where it corresponds to the identity $(x \to x)$.

	We start by showing that the chain map above is injective.
	Consider a morphism of presheaves $F \colon \cN \to \cN'$ over $X$ with $\assembly F = 0$.
	For each simplex $x$ choose a basis $B_x$ of $\cN_x$ and notice that
	\[
	B = \set[\big]{[x \ot b] \mid x \in X, b \in B_x}
	\]
	is a basis for $\assembly \cN$.
	By assumption we have $\assembly F [x \ot b] \defeq [x \ot F_x(b)] = 0$ for each $[x \ot b] \in B$, which implies $F_x(b) = 0$ and consequently $F = 0$.

	Let us now prove the surjectivity of this map.
	Consider a morphism of comodules $f \colon \assembly \cN \to \assembly \cN'$.
	We will construct a morphism of presheaves $F \colon \cN \to \cN'$ such that $\assembly F = f$.
	Consider $x \in X$, $c \in \cN_x$ and the class $[x \ot c] \in \assembly \cN$.
	Its image under $f$ is of the form
	\[
	f \big( [x \ot c] \big) = \sum_{\lambda \in \Lambda} \, [x_\lambda \ot c_\lambda]
	\]
	where $x_\lambda \in X$ and $c_\lambda \in \cN'_{x_\lambda}$.
	We will first show that for each $\lambda$ in the (finite) sum above the dimension of the simplex $x_\lambda$ satisfies $\bars{x_\lambda} \leq \bars{x}$.
	Let $i = \max \set[\big]{\bars{x_\lambda} : \lambda \in \Lambda}$ and $\Lambda_i = \set[\big]{\lambda \in \Lambda : \bars{x_\lambda} = i}$.
	Using \cref{ss:special_cases} we have $\copr_i(x_{\lambda}) = \pm \, x_{\lambda} \ot x_{\lambda}$ for $\lambda \in \Lambda_i$ and $\copr_i(x_{\lambda}) = 0$ for $\lambda \notin \Lambda_i$.
	Assume $i > \bars{x}$ so $\copr_i(x) = 0$.
	Therefore, using that $f$ is a comodule map we have
	\begin{align*}
		0 &=
		(\id \ot f) \circ \nabla_i \big( [x \ot c] \big) \\&=
		\nabla_i \circ f \big( [x \ot c] \big) \\&=
		\sum_{\lambda \in \Lambda} \nabla_i \big( [x_\lambda \ot c_\lambda] \big) \\&=
		\sum_{\lambda \in \Lambda_i} \pm \, x_\lambda \ot [x_\lambda \ot c_\lambda],
	\end{align*}
	which implies $[x_\lambda \ot c_\lambda] = 0$ and consequently $c_{\lambda} = 0$ for each $\lambda \in \Lambda_i$.

	We will now show that $\Lambda$ has a single element $\lambda$ with $x_\lambda = x$.
	Let $i = \bars{x}$, so
	\begin{align*}
		(\id \ot f) \circ \nabla_i \big( [x \ot c] \big) &=
		\pm \, x \ot f \big( [x \ot c] \big) \\&=
		\sum_{\lambda \in \Lambda} \pm \, x \ot [x_\lambda \ot c_\lambda]
	\end{align*}
	and
	\[
	\nabla_i \circ f \big( [x \ot c] \big) =
	\sum_{\lambda \in \Lambda_i} \pm \, x_\lambda \ot [x_\lambda \ot c_\lambda].
	\]
	Using that $f$ is a comodule map we have
	\[
	\sum_{\lambda \in \Lambda} \pm \, x \ot [x_\lambda \ot c_\lambda] \, =
	\sum_{\lambda \in \Lambda_i} \pm \, x_\lambda \ot [x_\lambda \ot c_\lambda],
	\]
	from which the claim follows.
	Using this we can define a collection of maps $F = \set[\big]{F_x \colon \cN_x \to \cN'_x}$ parameterized by the simplices of $X$ by the identity $[x \ot F_x(c)] = f \big( [x \ot c] \big)$.
	We claim that $F$ is a well defined morphism of presheaves on $X$.
	That is to say, that for any morphism $x \to y$ and $c \in \cN_y$ we have
	\[
	\cN'_{x \to y} \circ F_y(c) = F_x \circ \cN_{x \to y}(c).
	\]
	It is clear that if this is the case then $\assembly F = f$.
	To prove this claim, it suffices to consider morphisms $\face_u(x) \to x$ where $\face_u(x)$ is obtained by removing the vertex of $x$ in position $u$.
	Let $i = \bars{x}-1$ and recall from \cref{ss:special_cases} that
	\[
	\copr_i(x) \ =
	\sum_{u \text{ even}} \pm \, x \ot \face_u(x) \ +
	\sum_{u \text{ odd}} \pm \face_u(x) \ot x.
	\]
	Therefore, on one hand we have that $(\id \ot f) \circ \nabla_i \big( [x \ot c] \big)$ is equal to
	\begin{align*}
		&\sum_{u \text{ even}} \pm \, x \ot [\face_u(x) \ot F_{\face_u(x)} \circ \cN_{\face_u(x) \to x}(c)] \ +
		\sum_{u \text{ odd}} \pm \face_u(x) \ot [x \ot F_x(c)],
	\end{align*}
	while on the other we have that $\nabla_i \circ f \big( [x \ot c] \big) \defeq \nabla_i [x \ot F_x(c)]$ is equal to
	\begin{align*}
		&\sum_{u \text{ even}} \pm \, x \ot [\face_u(x) \ot \cN'_{\face_u(x) \to x} \circ F_x(c)] \ +
		\sum_{u \text{ odd}} \pm \face_u(x) \ot [x \ot F_x(c)].
	\end{align*}
	Using the fact that $f$ is a comodule map we can deduce that
	\[
	\sum_{u \text{ even}} \pm [\face_u(x) \ot F_{\face_u(x)} \circ \cN_{\face_u(x) \to x}(c)] \ =
	\sum_{u \text{ even}} \pm [\face_u(x) \ot \cN'_{\face_u(x) \to x} \circ F_x(c)]
	\]
	and, consequently, that for $u$ even
	\[
	F_{\face_u(x)} \circ \cN_{\face_u(x) \to x}(c) =
	\cN'_{\face_u(x) \to x} \circ F_{x}(c)
	\]
	as desired.
	For $u$ odd we repeat the same argument using $\nabla_i^T$ instead of $\nabla_i$.
	This concludes the proof of our main theorem.
\end{proof}
	\sloppy
	\printbibliography
\end{document}